\newtheorem{theorem}{Theorem}
\newtheorem{prop}[theorem]{Proposition}
\newtheorem{lemma}[theorem]{Lemma}
\newtheorem{cor}[theorem]{Corollary}
\newtheorem{question}{Question}
\newcommand{\F}{{\mathbb F}}
\newcommand{\N}{{\mathbb N}}
\newcommand{\Z}{{\mathbb Z}}
\newcommand{\cH}{{\mathcal H}}
\newcommand{\cQ}{{\mathcal Q}}
\newcommand{\cS}{{\mathcal S}}
\newcommand{\cT}{{\mathcal T}}
\newcommand{\schur}{\textsc{schur}}
\newcommand{\pyth}{\textsc{pyth}}
\newcommand{\prim}{\textsc{prim}}
\title{Pythagorean Partition-Regularity and Ordered Triple Systems with the Sum Property}
\author{Joshua Cooper\footnote{University of South Carolina, Department of Mathematics, \texttt{cooper@math.sc.edu}} \;and Chris Poirel\footnote{Virginia Tech, Department of Computer Science, \texttt{chris.poirel@gmail.com} }}
\date{\today}
\begin{document}

\maketitle

\begin{abstract} Is it possible to color the naturals with finitely many colors so that no Pythagorean triple is monochromatic?  This question is even open for two colors.  A natural strategy is to show that some small nonbipartite triple systems cannot be realized as a family of Pythagorean triples. It suffices to consider partial triple systems (PTS's), and it is therefore natural to consider the Fano plane, the smallest nonbipartite PTS.  We show that the Pythagorean triples do not contain any Fano plane.  In fact, our main result is that a much larger family of ``ordered'' triple systems (viz. those with a certain ``sum property'') do not contain any Steiner triple system (STS).
\end{abstract}

An equation over the integers is called ``partition regular'' if, for any coloring of the naturals (or integers) with finitely many colors, some solution to the equation is monochromatic.  For example, it is the first nontrivial case of Van der Waerden's famous Theorem that $2y = x+z$ is partition regular; this is another way to state that any coloring of $\N$ contains arithmetic progressions of length three in one color.  Other celebrated results are Schur's Theorem that $x+y=z$ is partition regular, and its broad generalization, Rado's Theorem.  Many more interesting examples and their offshoots are discussed in \cite{LR04}.

Much less is known about the regularity of nonlinear equations.  A very natural question is the regularity of the ``Pythagorean equation'' $x^2 + y^2 = z^2$.  Is there a way to color $\N$ with finitely many colors so that no Pythagorean triple is monochromatic?  It is not hard to see that there exists an $O(\log n)$-coloring of $\{1,\ldots,n\}$ (color $k$ by the number of times $5$ divides $k$), but essentially nothing more is known about this question.  It is not even known if it is possible to $2$-color the naturals with no monochromatic Pythagorean triple!  In fact, by using a combination of random and greedy heuristics -- and hundreds of hours of processor time -- the authors have found a way to $2$-color the integers $1$ to $1344$ so that no Pythagorean triple with all three points in that range are monochromatic.  This coloring is shown in Figure \ref{fig:coloring}.

\begin{figure}[h]
\centering
\includegraphics[scale = .4]{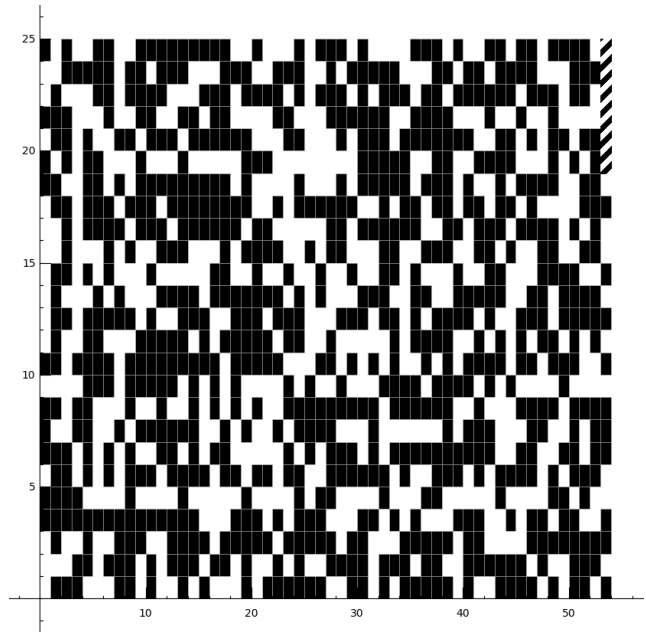}
\caption{Graphical representation of a $2$-coloring of $\{1,\ldots,1344\}$ avoiding monochromatic Pythagorean triples.  The integer $n$ is represented by a block at location $(a,b)$ where $n = 25b + a$, $1 \leq a \leq 25$.  The striped region is uncolored.} \label{fig:coloring}
\end{figure}

In order to show that the Pythagorean triples are partition regular, it suffices to demonstrate a subfamily of them which is not bipartite, i.e., any $2$-coloring induces a monochromatic triple.  (The existence of such a family follows from the de Bruijn-Erd\H{o}s Theorem, q.v. e.g. \cite{O83}.)  It is ``well known'' that no two Pythagorean triples share two points, i.e., no leg and hypotenuse of one right triangle are the two legs of another integer right triangle.  Indeed, the two equations $x^2 + y^2 = z^2$ and $y^2 + z^2 = t^2$ yield two squares ($y^2$ and $z^2$) whose sum and difference are also squares.  One can find an elementary proof that no such pair of squares exists in Sierpi\'nski's classic text (\cite{S88}).

Therefore, in our search for nonbipartite subsystems of triples, we need only consider those in which each pair of triples intersect in at most one point.  Such families are known as ``linear $3$-uniform hypergraphs'' (in the graph theory community) and ``partial triple systems'' or ``packings'' (in design theory).  The smallest nonbipartite partial triple system is the ubiquitous Fano plane $F_7$, the projective plane over $\F_2$ with seven points and seven triples.  $F_7$ is also a ``Steiner triple system,'' meaning that each pair of points is contained in exactly one triple.  Our main result below is that a broad class of triple systems including the family $\pyth$ of all Pythagorean triples contains no Steiner triple systems.  Therefore, any search for nonbipartite partial triple systems in $\pyth$ necessarily must consider other systems larger than $F_7$.

In the next section, we introduce some notation and definitions.  Section \ref{ref:noSTS} contains the proof that $\pyth$, and all ordered triple systems with the ``sum property'', contain no Steiner triple systems.  We conclude by mentioning a few outstanding questions about $\pyth$ we would like to see answered.

\section{Preliminaries}

A {\it triple system} $\cH$ is a pair $(V,E) = (V(\cH),E(\cH))$ consisting of a vertex set $V$ and a family of unordered triples $E \subset \binom{V}{3}$.    A {\it Steiner triple system} is a partial triple system $\cS$ so that, for each pair of vertices $x$ and $y$, there is some $z$ so that $\{x,y,z\}$ is an edge of $\cS$.  An {\it ordered triple system} $\cH$ is a triple $(V,E,<) = (V(\cH),E(\cH),<_{\cH})$ consisting of a vertex set $V$, a family of unordered triples $E \subset \binom{V}{3}$, and a total ordering $<$ on $V$.  We say that an ordered triple system $\cH$ has the {\it sum property} if, whenever $\{a,b,c\}$ and $\{a^\prime,b^\prime,c^\prime\}$ are two edges with respective maxima $c$ and $c^\prime$,
$$
(a \leq a^\prime) \wedge (b < b^\prime) \Rightarrow (c < c^\prime).
$$

\noindent {\bf Examples}:
\begin{enumerate}
\item Let $V = \Z$, and, for each $a \neq b$, let $e = (a,b,a+b)$ be an edge of $E$.  We call this the {\it Schur Triple System}, and denote it by $\schur$.  It is easy to see that $\schur$ has the sum property.  This shows immediately that the sum property does not imply finite colorability, since Schur's Theorem says that any coloring $\chi : \N \rightarrow [c]$ with finitely many colors $c$ admits a monochromatic $e \in \schur$, i.e., $|\chi(e)| = 1$.  Such a map is called a (weak) hypergraph coloring.   A map $\chi : \N \rightarrow [c]$ so that $|\chi(e)| = 3$ instead of just $|\chi(e)| > 1$ is known as a ``strong coloring,'' and corresponds exactly to a proper vertex coloring of the complement of the ``leave,'' i.e., the graph of all pairs contained in some triple $e \in E(\cH)$.  Graph and set theorists sometimes call this the ``shadow'' graph of $\cH$, and  denote it by the topological boundary operator $\partial \cH$.  Topologists variously know this graph as the ``$1$-skeleton'' as well.
\item It is clear that any order-preserving isomorphic image of a triple system with the sum property also has the sum property, and that any subhypergraph of a triple system with the sum property does as well.  For example, we define the {\it Pythagorean Triple System} $\pyth$ by $V = \N$ and $E(\pyth) = \{\{a,b,c\} : a^2 + b^2 = c^2\}$.  Since we can embed $\pyth$ into $\schur$ monotonically by the map $n \mapsto n^2$, $\pyth$ has the sum property as well.  It is a wide open problem to determine whether $\pyth$ has a weak coloring.  (It is even open whether it is strongly colorable.)  As mentioned in the introduction, $\pyth$ is actually linear, i.e., no two edges intersect in more than one vertex.
\item A special subsystem of $\pyth$ is the {\it Primitive Pythagorean Triple System} $\prim$ consisting of all Pythagorean triples which are relatively prime.  That is, $V = \N$ and
    $$
    E(\prim) = \{\{a,b,c\} : a^2 + b^2 = c^2 \textrm{ and } \gcd(a,b,c) = 1\}.
    $$
    It is easy to see that $\pyth$ is actually a union of dilates of $\prim$ by each $d \in \N$.  However, $\prim$ is bipartite: the parity coloring $n \mapsto n \pmod{2}$ provides a $2$-coloring.
\end{enumerate}

We define a special class of partial triple systems called ``bicycles.''  The $k$-bicycle has $2k+2$ vertices and $2k$ edges.  Its vertices are the elements of $\Z_{2n}$ and two ``antipodes'' $a$ and $b$; its edges are all triples of the form $\{a,2j,2j+1\}$ and $\{b,2j-1,2j\}$, $0 \leq j < n$.  The $2$-bicycle is also known as the {\it Pasch configuration}, or {\it quadrilateral}: the six-point partial triple system consisting of the edges $abc$, $ade$, $bef$, $cdf$.  The $3$-bicycle appears in the literature as the ``hexagon'' (e.g., \cite{CF08}): eight points $\{a,b,d,e,f,g,h,i\}$ with edges $\{afh,aei,adg,beh,bdi,bfg\}$.

\begin{figure}[h]
\centering
\includegraphics[scale = .4]{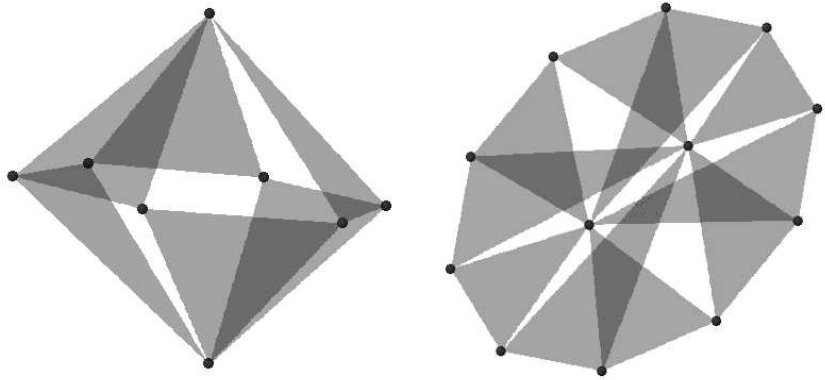}
\caption{A $3$-bicycle and a $5$-bicycle.} \label{fig:bicycles}
\end{figure}

The following proposition follows immediately from well-known results in the theory of triple systems.  (See, for example, \cite{CCR83}.)  For completeness, we give a short proof.

\begin{lemma} \label{prop:stshasbicycle} If $v,w$ are vertices of nontrivial Steiner triple system $\cS$, then there is a $k$-bicycle for some $k \geq 2$ in $\cS$ with antipodes $v$ and $w$.
\end{lemma}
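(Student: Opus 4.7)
The plan is to translate the bicycle structure into the language of involutions on an STS and then locate one as a cycle in the union of two perfect matchings.

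For each vertex $u$ of $\cS$, let $f_u \colon V \setminus \{u\} \to V \setminus \{u\}$ be the map sending $y$ to the unique $z$ with $\{u,y,z\} \in E(\cS)$. The STS axiom makes $f_u$ a fixed-point-free involution. Now let $x$ be the third point of the unique triple through $v$ and $w$, and set $W = V \setminus \{v,w,x\}$. The first step is to check that both $f_v$ and $f_w$ restrict to involutions of $W$. For $y \in W$, the image $f_v(y)$ cannot be $v$, it cannot be $w$ (else $y$ would lie in the triple $\{v,w,x\}$, forcing $y=x$), and it cannot be $x$ (else by uniqueness of the triple through $v,x$ we would have $y = w$); the same reasoning handles $f_w$.

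The key object will be the $2$-regular graph $G$ on $W$ whose edges come in two colors, the red edges $\{y,f_v(y)\}$ and the blue edges $\{y,f_w(y)\}$. Since $f_v$ and $f_w$ are fixed-point-free involutions without common fixed points, $G$ decomposes into properly edge-$2$-colored cycles of even length $2k$. I will observe that no cycle of length $2$ can appear: such a cycle would consist of two vertices $y,z$ paired by both $f_v$ and $f_w$, so $\{v,y,z\}$ and $\{w,y,z\}$ would be two distinct triples through the pair $y,z$, violating the STS property. Hence every component cycle satisfies $k \geq 2$.

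Finally, I will show that $W$ is nonempty, which uses the nontriviality of $\cS$ (any STS properly containing a single triple has at least seven vertices, so $|W| \geq 4$). Picking any cycle in $G$, I will label its vertices $c_0, c_1, \ldots, c_{2k-1}$ in cyclic order, with the orientation chosen so that $\{c_{2j},c_{2j+1}\}$ is red and $\{c_{2j+1},c_{2j+2}\}$ is blue (indices mod $2k$). Then the triples $\{v,c_{2j},c_{2j+1}\}$ and $\{w,c_{2j-1},c_{2j}\}$, for $0 \leq j < k$, are edges of $\cS$ and fit exactly the definition of a $k$-bicycle in $\cS$ with antipodes $v$ and $w$.

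The one place that requires genuine care, and the only step I see as a possible obstacle, is verifying that the involutions really do restrict to $W$; everything afterward is a clean application of the fact that a disjoint union of two fixed-point-free involutions on a finite set produces alternating cycles.
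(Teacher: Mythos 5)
Your proof is correct and follows essentially the same route as the paper: your involutions $f_v, f_w$ restricted to $W$ are exactly the paper's link matchings $M_1, M_2$ on $V \setminus \{v,w,z\}$, and in both arguments the bicycle is an alternating cycle (necessarily of length at least $4$, by uniqueness of the triple through a pair) in their union. Your write-up is somewhat more careful about why the matchings live on $V \setminus \{v,w,x\}$ and why $W$ is nonempty, but the underlying idea is identical.
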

\begin{proof} Define the ``link'' $\cS_x$ of a vertex $x \in \cS$ (also known as the ``star'' of $x$) to be the set of pairs $\{a,b\}$ so that $\{a,b,x\}$ is an edge of $\cS$.  For some $z \in \cS$, $\{v,w,z\}$ is a triple of $\cS$, so $\cS_v$ consists of a matching $M_1$ on $\cS - \{v,w\}$, plus the edge $\{w,z\}$; similarly, $\cS_w$ consists of a matching $M_2$ on $\cS - \{v,w\}$, plus the edge $\{v,z\}$.  The union of $M_1$ and $M_2$ is composed of even-length cycles of length at least $4$; any one of these forms a bicycle with antipodes $v$ and $w$.
\end{proof}

We now define two weaker implicants of the sum property.  We say that an ordered triple system has the {\it upper sum property} if, whenever $\{a,b,c\}$ and $\{a,b^\prime,c^\prime\}$ are two edges with $c$ and $c^\prime$ their respective maxima,
$$
(b > b^\prime) \Rightarrow (c > c^\prime).
$$
This clearly follows from the sum property by setting $a = a^\prime$.  We say that an ordered triple system {\it lower sum property} if, whenever $\{a,b,c\}$ and $\{a^\prime,b^\prime,c\}$ are two edges with $c$ as both of their maxima,
$$
(a \geq a^\prime) \Rightarrow (b < b^\prime).
$$
To see that the lower sum property follows from the sum property, suppose that $a \geq a^\prime$ but $b > b^\prime$.  Then it follows that the maximal element of $\{a,b,c\}$ is greater than the maximal element of $\{a^\prime,b^\prime,c\}$, whence $c > c$, a contradiction.  We say that two pairs of integers $\{a,b\}$ and $\{c,d\}$ are ``nesting'' if $a \leq c \leq d \leq b$.  Then it is possible to restate the upper sum property as the fact that, for each vertex $x$, the subset of the link graph $\cH_x$ intersecting $\{y : y \geq x\}$ is a non-nesting matching.  The lower sum property may be similarly restated as the fact that, for each vertex $x$, the subset of the link graph $\cH_x$ induced by $\{y : y \leq x\}$ is fully nested matching, i.e., for each two edges $e$ and $f$, $e$ is nested in $f$ or vice versa.

\section{Sum Property implies No STS} \label{ref:noSTS}

\begin{prop} \label{prop:noquadupper} If $\cH$ has the upper sum property, and $\cQ$ is a $k$-bicycle in $\cH$, then the maximal two points of $\cQ$ are not its antipodes.
\end{prop}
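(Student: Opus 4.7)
The plan is to argue by contradiction. Suppose the two maximal vertices of $\cQ$ are its antipodes; I label them $a$ and $b$, and by swapping their roles may assume $a > b$. Then every non-antipodal (cycle) vertex of $\cQ$ lies below both $a$ and $b$, so in each of the $2k$ edges of $\cQ$ the maximum is either $a$ or $b$.

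Next, I apply the upper sum property at each cycle vertex. Label the cycle vertices $v_0, v_1, \ldots, v_{2k-1}$ (indices mod $2k$) so that the edges of $\cQ$ are precisely $\{a, v_{2j}, v_{2j+1}\}$ and $\{b, v_{2j+1}, v_{2j+2}\}$ for $0 \leq j < k$. Each $v_i$ lies in exactly one $a$-edge and one $b$-edge; write $w_a(v_i)$ and $w_b(v_i)$ for the remaining vertex of each. Viewing the two edges through $v_i$ as $\{v_i, w_a(v_i), a\}$ and $\{v_i, w_b(v_i), b\}$ with respective maxima $a$ and $b$, the upper sum property applied at $v_i$ yields $w_a(v_i) > w_b(v_i) \Rightarrow a > b$; swapping the two edges gives the converse implication, and since the two edges share only $v_i$ in $\cQ$ I cannot have $w_a(v_i) = w_b(v_i)$. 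From $a > b$ I therefore conclude $w_a(v_i) > w_b(v_i)$ for every $i$.

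Finally, I translate this into a cyclic inequality on the cycle vertices. For $i = 2j$, one reads off $w_a(v_{2j}) = v_{2j+1}$ and $w_b(v_{2j}) = v_{2j-1}$, so $v_{2j+1} > v_{2j-1}$ for every $j$. Iterating this around the odd-indexed subcycle of length $k$ produces the chain $v_1 < v_3 < \cdots < v_{2k-1} < v_1$, the desired contradiction. (The case $i = 2j+1$ gives the mirror contradiction on the even-indexed subcycle.) The only real content beyond bookkeeping lies in the second step: one must apply the upper sum property in both directions to pin down $w_a(v_i) > w_b(v_i)$ rather than merely excluding a one-sided implication. Once that is in hand, the cyclic monotonicity contradiction in the last step is immediate.
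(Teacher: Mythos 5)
Your proof is correct and follows essentially the same route as the paper's: apply the upper sum property at each cycle vertex shared by an $a$-edge and a $b$-edge, deduce a strict inequality between the two remaining cycle vertices, and close the resulting chain of inequalities into a cyclic (hence inconsistent) one. Your second step is in fact slightly more careful than the paper's, which asserts the implication $(a>b)\Rightarrow(2j>2j+2)$ without spelling out that one needs the swapped instance of the property together with the distinctness of the two neighbors.
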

\begin{proof} Let
$$
\cQ = (\Z_{2n} \cup \{a,b\},\{a01,a23,a45,\ldots\}\cup\{b12,b34,b56,\ldots\}) \subset \cH
$$
be a $k$-bicycle, and suppose $a$ and $b$ are the maximal two points of $\cQ$.  We may assume without loss of generality that $a > b$.  Then the maximal elements of all triples are $a$ or $b$ (depending on which antipode they contain).  Therefore, since $\{a,2j,2j+1\} \cap \{b,2j+1,2j+2\} = \{2j+1\}$,
$$
(a > b) \Rightarrow (2j > 2j+2)
$$
for each $0 \leq j < n$.  However, the quantities above are modulo $2n$, whence the set of resulting inequalities is circular and therefore inconsistent.
\end{proof}

\begin{prop} \label{prop:noquadlower} If $\cH$ has the lower sum property, and $\cQ$ is a $k$-bicycle in $\cH$, then the maximal two points of $\cQ$ are not its antipodes.
\end{prop}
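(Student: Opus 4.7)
The plan is to mirror Proposition~\ref{prop:noquadupper} but replace its circular chain of inequalities with a structural argument, since the lower sum property constrains pairs of edges sharing a \emph{maximum} rather than an arbitrary common element. Suppose for contradiction that the antipodes $a$ and $b$ of the $k$-bicycle $\cQ$ are its two maximal points, with WLOG $a>b$. Then every $a$-triple of $\cQ$ has maximum $a$ and every $b$-triple has maximum $b$, so the two link matchings
\[
\cQ_a = \{\{2j,2j+1\} : 0\leq j<n\} \quad\text{and}\quad \cQ_b = \{\{2j-1,2j\} : 0\leq j<n\}\ (\text{mod } 2n)
\]
lie entirely below their respective apex. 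By the restatement of the lower sum property given at the end of Section~2, each is a sub-matching of a fully nested matching in $\cH$, and hence is itself fully nested; moreover both are perfect matchings on the common $2n$-element totally ordered set $V(\cQ)\setminus\{a,b\}$.

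The key intermediate step will be the following small lemma: on any totally ordered $2n$-element set, the only fully nested perfect matching is the ``rainbow'' matching that pairs the $i$-th smallest element with the $i$-th largest. I would prove it by induction --- the outer pair must be $\{\min,\max\}$, because otherwise the pair containing the minimum and the pair containing the maximum would be disjoint intervals that cannot nest, and after removing this outer pair the claim reduces to the analogous statement on $2n-2$ elements. This should be routine.

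Applying the lemma forces $\cQ_a=\cQ_b$. However, by construction these are the two disjoint perfect matchings of the Hamiltonian $2n$-cycle on the non-antipodal vertices: an equality $\{2j,2j+1\}=\{2j'-1,2j'\}$ would force $2j'=2j+1$, which is impossible by parity. Since $n=k\geq 2$, both matchings are nonempty and share no edge, yielding the required contradiction. The main obstacle I anticipate is conceptual rather than computational --- the lower sum property, unlike the upper, only compares two triples with a common maximum, so one cannot imitate the UP proof's alternation between $a$-triples and $b$-triples sharing a middle vertex. Once one translates LP into the restated ``fully nested'' form and invokes the elementary uniqueness lemma above, the remaining steps are clean.
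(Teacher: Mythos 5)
Your proof is correct, and it reaches the contradiction by a route that is recognizably a strengthening of the paper's. The paper also begins by observing that the lower sum property forces each antipode's link matching on $\Z_{2n}$ to be fully nested, but it then stops at the very first consequence: the outermost pair of the $a$-matching, say $\{0,1\}$, must consist of the global minimum and maximum of the non-antipodal vertices, and the two $b$-pairs $\{-1,0\}$ and $\{1,2\}$ (one touching that minimum, one touching that maximum, and distinct since $k\geq 2$) cannot possibly nest with each other. That is exactly the base case of your induction. You instead push the induction all the way to the uniqueness of fully nested perfect matchings (the ``rainbow'' matching), conclude $\cQ_a=\cQ_b$, and contradict the edge-disjointness of the two perfect matchings of the Hamiltonian cycle. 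What your version buys is a reusable structural lemma and a contradiction that does not require inspecting particular neighboring pairs; what the paper's version buys is brevity, since only the outer layer of the nesting is ever needed. Two small polish points: in the lemma, the pair containing the minimum and the pair containing the maximum are not necessarily ``disjoint intervals'' --- the correct statement is that neither can nest inside the other, since nesting would force the other pair to also contain the global minimum (respectively maximum); and your parity remark handles the case $2j=2j'-1$ but you should also dispose of the case $2j=2j'$, $2j+1=2j'-1$, which fails because $2\not\equiv 0 \pmod {2n}$ for $n\geq 2$ rather than by parity. Neither affects the validity of the argument.
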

\begin{proof} Let
$$
\cQ = (\Z_{2n} \cup \{a,b\},\{a01,a23,a45,\ldots\}\cup\{b12,b34,b56,\ldots\}) \subset \cH
$$
be a $k$-bicycle, and suppose $a$ and $b$ are the maximal two points of $\cQ$.  We may assume without loss of generality that $a > b$.  Then the maximal elements of all triples are $a$ and $b$ (depending on which antipode they contain).  Since $\{a,2j,2j+1\}$ is an edge for each $0 \leq j < n$, the $n$ pairs $\{2j,2j+1\}$ are a matching, and they are linear ordered by nesting, by the lower sum property.  Suppose, without loss of generality, that $\{0,1\}$ is the outermost matching edge and $0 < 1$, so that, for all $x \in \Z_{2n} \setminus \{0,1\}$, $0 < x < 1$.  The pairs $\{-1,0\}$ and $\{1,2\}$ are also nested, since they arise from the edges $\{b,-1,0\}$ and $\{b,1,2\}$.  Hence, $-1 < \{1,2\} < 0$ or $1 < \{-1,0\} < 2$, and each of these possibilities contradicts $0 < 1$.
\end{proof}

\begin{cor} If $\cH$ has the full, lower, or upper sum property, then it does not contain any Steiner triple system.
\end{cor}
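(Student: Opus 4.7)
The plan is to combine Lemma \ref{prop:stshasbicycle} with Propositions \ref{prop:noquadupper} and \ref{prop:noquadlower} in essentially one line, after a single well-chosen ``pick the two biggest vertices'' step. Since the sum property implies both the upper and the lower sum property (as already noted in the Preliminaries), it suffices to handle the upper and lower cases; either one alone is enough to yield the corollary.

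First, I would assume for contradiction that $\cH$ contains a Steiner triple system $\cS$. Any STS is nontrivial (has at least one edge, hence at least three vertices), so we may choose from $V(\cS)$ the two vertices $v,w$ that are maximal in the ordering $<_\cH$ restricted to $V(\cS)$. Applying Lemma \ref{prop:stshasbicycle} to this pair produces some $k \geq 2$ and a $k$-bicycle $\cQ \subset \cS \subset \cH$ whose antipodes are precisely $v$ and $w$.

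The key observation is that, by our choice of $v$ and $w$, every other vertex of $\cQ$ is smaller (in $<_\cH$) than both $v$ and $w$. Therefore the maximal two points of $\cQ$ are its antipodes. This directly contradicts Proposition \ref{prop:noquadupper} if $\cH$ has the upper sum property, and Proposition \ref{prop:noquadlower} if $\cH$ has the lower sum property. In either case we obtain a contradiction, proving the corollary.

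I do not expect any real obstacle here: the substantive work has been done in the three preceding results, and the only ``content'' in the corollary proof is the trivial but crucial observation that the two globally maximal vertices of the STS are automatically the maximal two vertices of any sub-bicycle in which they appear as antipodes. The only thing to be careful about is the degenerate case where $\cS$ has fewer than three vertices, which is ruled out by ``nontrivial,'' and ensuring the lemma applies to the specific pair $v,w$, which it does since it applies to any pair of vertices.
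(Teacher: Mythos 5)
Your proposal is correct and follows exactly the paper's argument: take the two maximal vertices of the putative STS, invoke Lemma \ref{prop:stshasbicycle} to get a bicycle with those vertices as antipodes, and contradict Proposition \ref{prop:noquadupper} or \ref{prop:noquadlower}. Your version is slightly more careful than the paper's in spelling out that the two globally maximal vertices of the STS are automatically the two maximal vertices of the sub-bicycle, but the approach is the same.
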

\begin{proof} Suppose $\cH$ contained some Steiner triple system $\cT$.  Let $a$ and $b$ be the maximal elements of $\cT$.  Then $a$ and $b$ are the antipodes of some bicycle, by Lemma \ref{prop:stshasbicycle}.  However, this contradicts Propositions \ref{prop:noquadupper} and/or \ref{prop:noquadlower}.
\end{proof}

Note that a triple system with the sum property can actually contain a quadrilateral: for example, $\schur$ contains $\{5,15,20\}$, $\{5,8,13\}$, $\{7,8,15\}$, $\{7,13,20\}$.

\section{Further Questions}

Here we collect a few open questions regarding $\pyth$ that we consider interesting.

\begin{question} Since the sum properties imply the nonexistence of some configurations with chromatic number $3$, is it possible to exploit them to find a coloring with few colors?
\end{question}

\begin{question} Is $\pyth$ strongly colorable?  Or does it contain arbitrarily large cliques?
\end{question}

\begin{question} Does $\pyth$ contain $F_7$ minus a single triple?
\end{question}

\begin{question} Is $\pyth$ connected?  Is $\prim$ connected?
\end{question}

\begin{question} Are there any non-finitely colorable {\it linear} triple systems with the sum property?
\end{question}

\section{Acknowledgements}

Thanks to the University of South Carolina Magellan Project for funding this research, and to Leonard Bailey for the proof that $\pyth$ is linear.


\begin{thebibliography}{100}
\bibitem{CCR83} Charles J.~Colbourn, Marlene J.~Colbourn, Alexander Rosa, Completing small partial triple systems.  {\it Discrete Math.}  {\bf 45}  (1983), no. 2-3, 165--179.
\bibitem{CF08} Charles J.~Colbourn and Yuichiro Fujiwara, Small stopping sets in Steiner triple systems, {\it Crypt. Commun.}, to appear.
\bibitem{LR04} Bruce M.~Landman and Aaron Robertson, Ramsey theory on the integers. Student Mathematical Library, {\bf 24}. American Mathematical Society, Providence, RI, 2004.
\bibitem{O83} Oystein Ore, Theory of graphs. American Mathematical Society Colloquium Publications, Vol. XXXVIII American Mathematical Society, Providence, R.I. 1983.
\bibitem{S88} Wac\l aw Sierpi\'nski, Elementary theory of numbers. North-Holland Mathematical Library, 31. North-Holland Publishing Co., Amsterdam; PWN---Polish Scientific Publishers, Warsaw, 1988.
\end{thebibliography}
\end{document}